\theoremstyle{definition}
\theoremstyle{remark}
\theoremstyle{plain}
\newtheorem{thm}{Theorem}[section]
\newtheorem{lem}[thm]{Lemma}
\newtheorem{cor}[thm]{Corollary}
\newtheorem*{kgt}{Theorem(Khintchine-Groshev)}
\newcommand{\norm}[1]{\ensuremath{\left\Vert #1 \right\Vert}}
\newcommand{\abs}[1]{\ensuremath{\left\vert #1 \right\vert}}
\DeclareMathOperator{\diam}{diam}
\DeclareMathOperator{\mat}{Mat}
\DeclareMathOperator{\GL}{GL}
\newcommand{\mb}[1]{\mathbb{#1}}
\newcommand{\mbf}[1]{\mathbf{#1}}
\newcommand{\mcal}[1]{\mathcal{#1}}
\begin{document}

\title{Metrical results on systems of small linear forms}

\author{M. HUSSAIN}

\address{M. Hussain, Department of Mathematics and Statistics, La Trobe University, Melbourne, 3086, Victoria, Australia}

\email{M.Hussain@latrobe.edu.au}

\thanks{MH's visit to Aarhus was sponsored in part by La Trobe
  University's travel grant and from SK's Danish Research Council for
  Independent Research}

\author{S. KRISTENSEN}

\address{S. Kristensen, Department of Mathematical Sciences, Faculty
  of Science, University of Aarhus, Ny Munkegade 118,
  DK-8000 Aarhus C, Denmark}

\email{sik@imf.au.dk}

\thanks{SK's Research supported the Danish Research Council for
  Independent Research. }

\keywords{Diophantine approximation; systems of linear forms; absolute
  value}

\subjclass[2000]{11J83}

\begin{abstract}
  In this paper the metric theory of Diophantine approximation
  associated with the small linear forms is investigated.
  Khintchine--Groshev theorems are established along with Hausdorff
  measure generalization without the monotonic assumption on the
  approximating function.
\end{abstract}

\maketitle

\section{Introduction}

Let $\psi:\mb{N}\to\mb{R}^+ $ be a function tending to $0$ at
infinity referred to as an \emph{approximation} function. An $m\times
n$ matrix $X=(x_{ij})\in\mb{I}^{mn}:= [ 0,1] ^{mn}$ is said to be
\emph{$\psi$--approximable} if the system of inequalities
\[
|q_{_{1}}x_{_{1}i}+q_{_{2}}x_{_{2}i}+\dots+q_{m}x_{mi}| \leq
\psi(|\mbf{q}|)\text{\quad{}for\quad}(1\leq i\leq n),
\]
is satisfied for infinitely many $\mbf{q}\in \mb{Z}
^{m}\setminus\{\mbf 0\}$. Here and throughout, the system $ q_1x_{1i}+
\dots+ q_{m}x_{mi}$ of $n$ linear forms in $m$ variables will be
written more concisely as $\mbf{q}X$, where the matrix $X$ is regarded
as a point in $ \mb{I}^{mn}$ and $|\mbf{q}|$ denotes the supremum norm
of the integer vector $\mbf{q}$.  The set of $\psi$--approximable
points in $\mb{I}^{mn} $ will be denoted by $W_0( m,n;\psi)$;
\[
W_0( m,n;\psi) :=\{ X\in
\mb{I}^{mn}:|\mbf{q}X|<\psi(|\mbf{q}|)\text{\ for i.m.\ }\mbf{q}\in
\mb{Z}^{m}\setminus \{\mbf{0}\}\},
\]
where `i.m.' means `infinitely many'.  For a monotonic approximating
function, the metric theory has been established for the set $W_0(m,
n; \psi)$ in \cite{mhjl} (the dimension of this set was obtained in
\cite{Dickinson}) and it's generalization to mixed case in \cite{DH}.
The aim of this paper is to discuss the metric theory for the set
$W_0(m,n;\psi)$ without the monotonicity assumption on the
approximating function.

It is worth relating the above to the set of $\psi$--approximable
matrices as is often studied in classical Diophantine approximation.
In such a setting studying the metric structure of the $\limsup$-set
\[
W( m,n;\psi)=\{ X\in \mb{I}^{mn}:\| \mbf{q}X\|
<\psi(|\mbf{q}|)\text{ for i.m. \ }  \mbf{q}\in \mb{Z}^{m}\setminus
\{\mbf{0}\}\},
\]
where $\|x\|$ denotes the distance of $x$ to the nearest integer
vector, is a central problem and the theory is well established, see
for example \cite{BBDV, BDV, bovdod, Falc2, sprindzuk}. In the case
that the approximating function is monotonic, the main result in this
setting is the Khintchine-Groshev theorem which gives an elegant
answer to the question of the size of the set $W(m,n;\psi)$. The
result links the measure of the set to the convergence or otherwise of
a series that depends only on the approximating function and is the
template for many results in the field of metric number theory. The
following is an improved modern version of this fundamental result --
see \cite{BDV} and references within. Given a set $X$, $| \, X \,
|_{k}$ denotes $k$-dimensional Lebesgue measure of $X$.

\begin{kgt}\label{remo}

\noindent Let $\psi$ be an approximating function. Then

 \[| W\left( m,n;\psi \right) |_{mn} = \left\{
\begin{array}{cl}
0 &
{\rm \ if} \  \sum \limits_{r=1}^{\infty}r^{m-1}\psi (r)^{n}<\infty,\\
&\\
1& {\rm \ if} \ \sum_{r=1}^{\infty} \
 r^{m-1}\psi (r)^{n}=\infty\,  \text{ and $\psi$ is monotonic}.
\end{array}\right.\]

\end{kgt}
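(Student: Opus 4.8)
The plan is to prove the two halves of the dichotomy separately: the convergence case by the first Borel--Cantelli lemma, and the divergence case by combining a zero--one law with the divergence form of the Borel--Cantelli lemma. For $\mbf q\in\mb Z^m\setminus\{\mbf 0\}$ write $A_{\mbf q}=\{X\in\mb I^{mn}:\norm{\mbf q X}<\psi(|\mbf q|)\}$, so that $W(m,n;\psi)=\limsup_{|\mbf q|\to\infty}A_{\mbf q}$. Since $\mbf q$ has at least one nonzero integer coordinate, for each column the map $\mbf x_i\mapsto\mbf q\cdot\mbf x_i\bmod 1$ pushes Lebesgue measure on $[0,1]^m$ forward to Lebesgue measure on $\mb R/\mb Z$; applying this independently in the $n$ columns gives $|A_{\mbf q}|_{mn}=(2\psi(|\mbf q|))^{n}$ for every $\mbf q$ with $\psi(|\mbf q|)\le\tfrac12$, hence for all but finitely many $\mbf q$. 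Summing,
\[
\sum_{\mbf q\ne\mbf 0}|A_{\mbf q}|_{mn}\;\asymp\;\sum_{r=1}^{\infty}\#\{\mbf q\in\mb Z^m:|\mbf q|=r\}\,\psi(r)^{n}\;\asymp\;\sum_{r=1}^{\infty}r^{m-1}\psi(r)^{n},
\]
and if this series converges the first Borel--Cantelli lemma gives $|W(m,n;\psi)|_{mn}=0$ at once; no monotonicity is needed in this half.

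For the divergence direction I would first record a zero--one law: $|W(m,n;\psi)|_{mn}\in\{0,1\}$. This follows from Cassels' general zero--one law for such $\limsup$ sets (or from the invariance of $W(m,n;\psi)$ under translation by rational matrices together with the Lebesgue density theorem). It therefore suffices to prove $|W(m,n;\psi)|_{mn}>0$, and for that I would apply the divergence form of the Borel--Cantelli lemma, reducing the whole matter to the quasi--independence estimate
\[
\sum_{\substack{\mbf q,\mbf q'\ne\mbf 0\\ |\mbf q|,|\mbf q'|\le N}}|A_{\mbf q}\cap A_{\mbf q'}|_{mn}\;\ll\;\Bigl(\sum_{\substack{\mbf q\ne\mbf 0\\ |\mbf q|\le N}}|A_{\mbf q}|_{mn}\Bigr)^{2}
\]
for infinitely many $N$, the first sum already being known to diverge.

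The intersections split into two types. If $\mbf q$ and $\mbf q'$ are not rational multiples of each other, the $2\times m$ integer matrix with rows $\mbf q,\mbf q'$ has rank $2$, so $\mbf x_i\mapsto(\mbf q\cdot\mbf x_i,\mbf q'\cdot\mbf x_i)\bmod\mb Z^2$ pushes Lebesgue measure to Lebesgue measure on $(\mb R/\mb Z)^2$; column by column this yields the exact independence $|A_{\mbf q}\cap A_{\mbf q'}|_{mn}=|A_{\mbf q}|_{mn}\,|A_{\mbf q'}|_{mn}$, whose total contribution to the double sum is precisely its right-hand side. If instead $\mbf q=a\mbf v$ and $\mbf q'=b\mbf v$ with $\mbf v\in\mb Z^m$ primitive and $a,b\in\mb Z\setminus\{0\}$, then on each column the problem collapses to estimating $|\{t\in[0,1):\norm{at}<\psi(|a|\,|\mbf v|),\ \norm{bt}<\psi(|b|\,|\mbf v|)\}|$; a standard elementary computation bounds this by a quantity of the shape $\psi(|a|\,|\mbf v|)\,\psi(|b|\,|\mbf v|)$ plus a lower-order term, and summing over the comparatively sparse family of such parallel pairs---using the monotonicity of $\psi$ to compare its values at commensurate arguments---shows that the parallel contribution is $\ll\sum_{r}r^{m-1}\psi(r)^{n}$, of strictly smaller order than the square of this divergent sum. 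The two estimates together give the required quasi--independence, whence $|W(m,n;\psi)|_{mn}>0$, and the zero--one law upgrades this to full measure.

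The crux of the argument---and the only place monotonicity is genuinely used---is the treatment of the parallel pairs: bounding $|A_{\mbf q}\cap A_{\mbf q'}|_{mn}$ when the two forms are rationally dependent, and then controlling the resulting sum. (When $m=1$ every pair of nonzero integers is ``parallel'', and the statement is precisely Khintchine's theorem on simultaneous approximation, which I would simply quote.) An alternative, and in some ways cleaner, route to the divergence half is to recognise the rational hyperplanes $\{X\in\mb I^{mn}:\mbf qX=\mbf p\}$ as a ubiquitous system and invoke the general ubiquity machinery; this bypasses the explicit zero--one law and, after a local ubiquity refinement, also yields the Hausdorff-measure statement taken up later in the paper.
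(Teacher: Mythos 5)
You should note first that the paper does not prove this statement at all: it is quoted as background (``an improved modern version of this fundamental result -- see \cite{BDV} and references within''), with the convergence half attributed to Borel--Cantelli and the divergence half to Gallagher, Beresnevich--Velani, Schmidt and Sprind\v{z}uk according to the value of $m$. So the comparison is with the classical literature rather than with an argument in the paper, and your proposal is in fact the classical route. Your convergence half is complete and correct: the measure computation $|A_{\mathbf{q}}|_{mn}=(2\psi(|\mathbf{q}|))^n$ (for $\psi(|\mathbf{q}|)\le\tfrac12$), the count $\#\{\mathbf{q}:|\mathbf{q}|=r\}\asymp r^{m-1}$, and the first Borel--Cantelli lemma give the zero-measure statement with no assumption on $\psi$, exactly as the paper asserts. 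Your divergence strategy (zero--one law, divergence Borel--Cantelli, exact independence for $\mathbb{Q}$-linearly independent $\mathbf{q},\mathbf{q}'$ via the surjective torus homomorphism, and a separate treatment of proportional pairs) is the standard quasi-independence proof one finds in the cited sources, and your identification of the proportional pairs as the only place monotonicity enters is accurate.

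Two caveats keep this from being a proof rather than a correct outline. First, the crux --- the overlap estimate for $\mathbf{q}=a\mathbf{v}$, $\mathbf{q}'=b\mathbf{v}$ and, more importantly, the summation over all such pairs showing their total contribution is $\ll\sum_r r^{m-1}\psi(r)^n$ (or at least $o$ of the square of the divergent sum) --- is asserted as ``a standard elementary computation'' but not carried out. The one-dimensional overlap bound carries a $\gcd(a,b)$ term, and controlling the resulting triple sum over the primitive direction $\mathbf{v}$, the common factor and the multipliers is precisely where the work and the monotonicity hypothesis live; for small parameters (notably $m=2$, $n=1$) this is delicate, which is why the non-monotonic refinements appeared only recently. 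Second, the zero--one law you invoke should be cited in its linear-forms version (e.g.\ the zero--one laws of Beresnevich and Velani, or Gallagher's argument); Cassels' law is the $m=n=1$ case, and the parenthetical ``invariance under rational translations plus density'' does not work as stated, since $W(m,n;\psi)$ is not literally invariant under translation by rational matrices. With those two points properly filled in (they are available in the references the paper lists), your argument is the classical proof of the statement.
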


The convergence part is reasonably straightforward to establish from
the Borel--Cantelli Lemma and is free from any assumption on $\psi$.
The divergence part constitutes the main substance of the
Khintchine--Groshev theorem and involves the monotonicity assumption
on the approximating function. It is worth mentioning that in the
original statement of the theorem \cite{Groshev, K1, K} the stronger
hypothesis that $q^m \psi(q)^n$ is monotonic was assumed.  In the
one-dimensional case $(m = n = 1)$, it is well known that the
monotonicity hypothesis in the Khintchine-Groshev theorem is
absolutely crucial. Indeed, Duffin and Schaeffer
\cite{DuffinSchaeffer} constructed a non-monotonic function for which
$\sum_{q=1}^{\infty}\psi(q)$ diverges but $|W(1,1; \psi)|=0.$ In other
words the Khintchine-Groshev theorem is false without the monotonicity
hypothesis and the conjectures of Catlin \cite{cat} and Duffin and
Schaeffer \cite{DuffinSchaeffer} provide appropriate alternative
statements, see \cite{BBDV} for the details and generalizations of
Duffin-Schaeffer and Catlin conjectures to the linear forms. Beyond
the one-dimensional case the monotonicity assumption on the
approximating function is completely removed. The proof is attributed
to various authors for different values of $m$.  For $m = 1$,
Khintchine--Groshev theorem without the monotonicity of $\psi$ was
proved by Gallagher \cite{Gal}.  For $m = 2$, this was recently proved
by Beresenevich and Velani in \cite{bvclassical}. For $m \geq 3$ it
can be derived from Schmidt \cite[Theorem 2]{schmidt1} or
Sprindz\v{u}k's \cite[\S1.5, Theorem 15]{sprindzuk}.

It is readily verified that $W_0( 1,n;\psi) = \{\mbf 0\}$ as any
$x=(x_1,x_2,\dots,x_n)\in{}W_0(1,n;\psi)$ must satisfy the
inequality $\left\vert qx_{j}\right\vert < \psi(q) $  infinitely
often. As $\psi(q)\to 0$ as $q\to\infty$ this is only possible if
$x_j=0$ for all $j=1,2,\dots,n.$ Thus when $m=1$ the set $W_0(1, n;
\psi)$ is a singleton and must have both zero measure and dimension.
We will therefore assume that $m\geq 2.$

\medskip

\emph{Notation.} To simplify notation the Vinogradov symbols $\ll$ and $\gg$
will be used to indicate an inequality with an unspecified positive
multiplicative constant depending only on $m$ and $n$.
If $a\ll b$ and $a\gg b$  we write $a\asymp
b$, and say that the quantities $a$ and $b$ are comparable.
A \emph{dimension function} is an increasing continuous
 function $f:\mb{R}^+\rightarrow
\mb{R}^+$
such that $f(r)\to 0$ as $r\to 0$.
Throughout the paper, $\mcal{H}^f$ denotes the
$f$--dimensional Hausdorff measure
which will be fully defined in section \ref{hm}. Finally, for convenience,
for a  given
 approximating function $\psi,$ define the function
 $$\Psi(r):=\frac{\psi(r)}{r}.$$

\section{Statement of the Results}

The main results below depend critically on assumptions on $m$ and
$n$. In order to get beyond the Duffin--Schaeffer counterexample (see
below), we will always assume that $m+n > 3$. However, an additional
phenomenon occurs when the number of forms is greater than or equal to
to the number of variables ($m \leq n$), and we will have to treat
each case separately. Our first result concerns the case when the
number of variables exceeds the number of forms.

\begin{thm}\label{thmmgn}
  Let $m> n$, $m+n>3$ and $\psi$ be an approximating function. Let
  $f$, $r^{-n^2}f(r)$ and $ r^{-(m-n-1)n}f(r)$ be dimension functions
  such that $r^{-mn}f(r)$ is monotonic. Then
  \[\mcal{H}^{f}\left(W_0\left( m,n;\psi \right)
  \right) = \left\{
    \begin{array}{cl}
      0 &
      {\rm \ if} \  \sum
      \limits_{r=1}^{\infty}f(\Psi (r))\Psi(r)^{-(m-1)n}r^{m-1}<\infty,\\
      &\\
      \mcal{H}^f(\mb{I}^{mn})& {\rm \ if} \  \sum
      \limits_{r=1}^{\infty}f(\Psi (r))\Psi(r)^{-(m-1)n}r^{m-1}=\infty.
    \end{array}\right.\]
\end{thm}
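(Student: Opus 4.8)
The plan is to treat the convergence and divergence parts separately, following the standard template for Hausdorff-measure Khintchine--Groshev theorems while adapting to the ``small linear forms'' setting (where the nearest-integer norm $\norm{\cdot}$ is replaced by the absolute value $\abs{\cdot}$, so that the relevant resonant sets are neighbourhoods of linear subspaces rather than families of parallel hyperplanes). The key geometric observation is that for a fixed $\mbf q\in\mb Z^m\setminus\{\mbf 0\}$, the set of $X\in\mb I^{mn}$ with $\abs{\mbf q X}<\psi(\abs{\mbf q})$ decomposes into $n$ independent conditions, one for each column of $X$: each column $\mbf x^{(i)}$ must lie within distance $\asymp\psi(\abs{\mbf q})/\abs{\mbf q}=\Psi(\abs{\mbf q})$ of the hyperplane $\mbf q\cdot\mbf x^{(i)}=0$ in $\mb I^m$. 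Thus $\W$ is a $\limsup$ set built from slabs, and its natural cover for a given $\abs{\mbf q}=r$ consists of $\asymp r^{m-1}$ choices of $\mbf q$, each contributing a slab of dimensions $\asymp 1\times\cdots\times 1\times\Psi(r)$ in each of the $n$ column-blocks.

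For the convergence part I would cover $\W$ by the tails of this natural cover: for each large $r$, chop each slab into $\asymp\Psi(r)^{-(m-1)}$ cubes of side $\Psi(r)$ in each column block, giving $\asymp\Psi(r)^{-(m-1)n}$ cubes of side $\Psi(r)$ per $\mbf q$, hence $\asymp r^{m-1}\Psi(r)^{-(m-1)n}$ such cubes over all $\mbf q$ with $\abs{\mbf q}=r$. The resulting $\mcal H^f$-sum is $\ll\sum_r r^{m-1}\Psi(r)^{-(m-1)n}f(\Psi(r))$, so convergence of this series forces $\mcal H^f(\W)=0$ by the Hausdorff--Cantelli lemma. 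The hypotheses that $r^{-n^2}f(r)$ and $r^{-(m-n-1)n}f(r)$ be dimension functions (equivalently, mild growth restrictions on $f$) are exactly what is needed to guarantee that the cube-count and diameter bookkeeping gives the clean exponent $-(m-1)n$ rather than something suboptimal; I would verify this comparison explicitly.

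For the divergence part, which I expect to be the main obstacle, the strategy is a mass-transference-type argument. I would first establish the Lebesgue (``$f=$ identity-like'') divergence statement — that is, a Khintchine--Groshev theorem for $W_0(m,n;\psi)$ without monotonicity on $\psi$, obtainable by combining the monotonic result of \cite{mhjl} with a ``slicing'' reduction: fixing all but one column reduces matters to a classical non-monotonic Khintchine--Groshev statement in dimension $m$, which holds for $m\geq 2$ by Gallagher/Beresnevich--Velani/Schmidt as quoted, and $m+n>3$ with $m>n\geq 1$ forces $m\geq 3$ unless $(m,n)=(2,1)$, the last handled by \cite{bvclassical} — and then integrating over the remaining columns via a Fubini/variance argument to upgrade full measure in one column to full measure in $\mb I^{mn}$. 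With the full-measure Lebesgue statement in hand, I would apply the Mass Transference Principle for systems of linear forms (the ``slabs to slabs'' / rectangles version, as in \cite{BBDV} or \cite{bovdod}): since each resonant set here is a slab (a $(mn-1)$-dimensional neighbourhood thickened in $n$ column-directions), the MTP converts the divergence of the Lebesgue series $\sum_r r^{m-1}\Psi(r)^{n}$... into $\mcal H^f$-fullness under the shrinking dictated by $f$, yielding $\mcal H^f(\W)=\mcal H^f(\mb I^{mn})$ precisely when $\sum_r f(\Psi(r))\Psi(r)^{-(m-1)n}r^{m-1}=\infty$. The delicate points are (i) checking that the technical dimension-function hypotheses on $f$ are exactly the admissibility conditions required by the linear-forms MTP in this slab geometry, and (ii) ensuring the case analysis on $(m,n)$ with $m+n>3$ never falls into the Duffin--Schaeffer regime — this is why the theorem is stated only for $m+n>3$, and I would flag precisely where that hypothesis is consumed.
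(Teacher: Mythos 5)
Your convergence argument (covering the slab associated to each $\mbf q$ by $\asymp\Psi(r)^{-(m-1)n}$ cubes of side $\Psi(r)$ and applying Hausdorff--Cantelli) is correct and is essentially the known proof, which the paper simply cites from \cite{mhjl}. The divergence part, however, contains two genuine gaps, and it is precisely there that the paper's actual mechanism differs from what you propose.

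First, your route to the Lebesgue-measure divergence statement does not work as described. Fixing all but one column of $X$ does \emph{not} reduce the problem to a classical (nearest-integer) Khintchine--Groshev statement: the single-column condition is $\abs{\mbf q\cdot\mbf x^{(i)}}<\psi(\abs{\mbf q})$, i.e.\ the small-value set $W_0(m,1;\psi)$, not $W(m,1;\psi)$, so Gallagher/Schmidt/Beresnevich--Velani cannot be invoked at that point; and the subsequent ``Fubini/variance argument'' to pass from one column to all of $\mb I^{mn}$ is not available, because the infinitely many vectors $\mbf q$ must be \emph{common} to all $n$ columns, so the columns are not independent events. The paper's way around both obstacles is a concrete trick you are missing: writing $X=\bigl(\begin{smallmatrix} I_n\\ \hat X\end{smallmatrix}\bigr)\tilde X$ with $\tilde X$ the invertible top $n\times n$ block and $\hat X=X'\tilde X^{-1}$, and choosing the first $n$ coordinates of $\mbf q$ to be the nearest integers to $\mbf r\hat X$, one sees that $X\in W_0(m,n;\psi)$ whenever $\hat X$ lies in the \emph{classical} set $W(m-n,n;\psi/nN)$ --- note the classical problem has $m-n$ (not $m$) variables, and the hypotheses $m>n$, $m+n>3$ are consumed exactly to keep $(m-n,n)\neq(1,1)$ so that the non-monotonic classical theorem applies. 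Fullness is then pulled back through the map $Y\mapsto XY$ by a density-point lemma (Lemma \ref{lem:bi-lip} in the paper); some such measure-transfer argument is needed and is absent from your sketch.

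Second, for the Hausdorff statement you appeal to an off-the-shelf ``mass transference principle for systems of linear forms'' applied directly to $\W$. The MTP of \cite{slicing}/\cite{BBDV} is formulated for the classical resonant sets $\{X:\mbf qX=\mbf p\}$ with $\mbf p$ ranging over all integer vectors; in the small-forms setting each $\mbf q$ contributes a single rational subspace through $0$, the resonant geometry is different (for $m\le n$ the set even degenerates onto a lower-dimensional surface), and the relevant divergence sum carries $r^{m-1}$ rather than $r^{m+n-1}$, so the citation does not apply and the ``admissibility check'' you defer is in fact the whole difficulty. The paper instead builds the bi-Lipschitz embedding $(Y,X)\mapsto\bigl(\begin{smallmatrix}X\\ YX\end{smallmatrix}\bigr)$ of $W(m-n,n;c\psi)\times\tilde A_{\epsilon,N}$ into $\W$, applies the known Hausdorff-measure Khintchine--Groshev theorem (Theorem \ref{hmkg}) to the classical factor with the auxiliary dimension function $g(r)=r^{-n^2}f(r)$ --- this is where the hypotheses on $r^{-n^2}f(r)$ and $r^{-(m-n-1)n}f(r)$ are actually used --- and then upgrades to $\mcal H^f(\W)=\infty$ via the slicing lemma (Lemma \ref{slicing}), not via an MTP for $W_0$ itself.
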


\noindent As in most of the statements the convergence part is
reasonably straightforward to establish and is free from any
assumptions on $m, n$ and the approximating function. This fact was
already established in \cite[Theorem 4]{mhjl}.  It is the
divergence statement which constitutes the main substance and this is
where conditions come into play.

The requirement that $r^{-mn}f(r)$ be monotonic is a natural and not
particularly restrictive condition. Note that if the dimension
function $f$ is such that $r^{-mn}f(r)\to \infty$ as $r\to 0$ then
$\mcal{H}^f(\mb{I}^{mn})=\infty$ and Theorem \ref{thmmgn} is the
analogue of the classical result of Jarn\'{\i}k (see \cite{DV,
  Jarnik1}). In the case when $f(r) := r^{mn}$ the Hausdorff measure
$\mcal{H}^f$ is proportional to the standard $mn$--Lebesgue measure
supported on $\mb{I}^{mn}$ and the result is the natural analogue of
the Khintchine--Groshev theorem for $ W_0\left( m,n;\psi\right )$.
\medskip
\begin{cor}\label{cor1}
  \noindent Let $m > n$ and $m+n>3$. Let $\psi$ be an approximating
  function, then
 \[| W_0\left( m,n;\psi \right) |_{mn} = \left\{
   \begin{array}{cl}
     0 &
     {\rm \ if} \
     \sum\limits_{r=1}^\infty{}\psi(r)^nr^{m-n-1}<\infty,\\
     &\\
     1& {\rm \ if} \
     \sum\limits_{r=1}^\infty{}\psi(r)^nr^{m-n-1}=\infty.
   \end{array}\right.\]
\end{cor}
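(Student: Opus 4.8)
The plan is to deduce Corollary~\ref{cor1} directly from Theorem~\ref{thmmgn} by specialising the dimension function to $f(r) = r^{mn}$. First I would verify that this choice satisfies all the hypotheses imposed on $f$ in Theorem~\ref{thmmgn}: the function $r \mapsto r^{mn}$ is increasing, continuous and vanishes at the origin; since $m > n$, the function $r^{-n^2}f(r) = r^{n(m-n)}$ has a strictly positive exponent and is therefore a dimension function; similarly $r^{-(m-n-1)n}f(r) = r^{n(n+1)}$ is a dimension function; and $r^{-mn}f(r) \equiv 1$ is (trivially) monotonic. Hence Theorem~\ref{thmmgn} is applicable under the sole hypotheses $m > n$ and $m + n > 3$.

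Next I would simplify the series governing the dichotomy in Theorem~\ref{thmmgn}. Writing $\Psi(r) = \psi(r)/r$ as above and inserting $f(r) = r^{mn}$ gives
\[
f(\Psi(r))\,\Psi(r)^{-(m-1)n}\,r^{m-1}
= \Psi(r)^{mn-(m-1)n}r^{m-1}
= \Psi(r)^{n}r^{m-1}
= \psi(r)^{n}r^{m-n-1},
\]
so the convergence and divergence cases of Theorem~\ref{thmmgn} are controlled by precisely the series $\sum_{r=1}^{\infty}\psi(r)^{n}r^{m-n-1}$ appearing in the corollary.

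Finally I would translate the conclusion from Hausdorff measure to Lebesgue measure. For a power dimension function $f(r)=r^{k}$, the measure $\mcal{H}^{f}$ on $\mb{R}^{k}$ is comparable to (indeed a constant multiple of) $k$--dimensional Lebesgue measure; in particular $\mcal{H}^{f}(W_0(m,n;\psi)) = 0$ if and only if $|W_0(m,n;\psi)|_{mn} = 0$, and $\mcal{H}^{f}(W_0(m,n;\psi)) = \mcal{H}^{f}(\mb{I}^{mn})$ together with $W_0(m,n;\psi)\subseteq\mb{I}^{mn}$ and $|\mb{I}^{mn}|_{mn}=1$ forces $|W_0(m,n;\psi)|_{mn}=1$. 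This yields the stated dichotomy. There is essentially no obstacle here: the corollary is a formal consequence of Theorem~\ref{thmmgn}, and the only point deserving a line of justification is the equivalence between $\mcal{H}^{f}$ and Lebesgue measure when $f(r)=r^{mn}$.
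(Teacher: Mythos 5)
Your proposal is correct and is essentially the paper's own derivation: the corollary is obtained by specialising Theorem~\ref{thmmgn} to $f(r)=r^{mn}$, noting (as the paper does just before the corollary) that $\mcal{H}^f$ is then proportional to $mn$--dimensional Lebesgue measure on $\mb{I}^{mn}$, and the series simplification $f(\Psi(r))\Psi(r)^{-(m-1)n}r^{m-1}=\psi(r)^n r^{m-n-1}$ together with the verification of the hypotheses on $f$ matches the intended argument.
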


\medskip

In the results above, the condition $m+n> 3$ is absolutely necessary.
For $m = 1$ the set $W_0(1, n; \psi)$ is singleton as already
remarked. For $m=2, n=1$, the Duffin--Schaeffer counter example can be
exploited to show that there exists a function $\psi$ such that
\[\sum_{r=1}^\infty{}\psi(r)=
\infty\ \ \ \text{but} \ \ |W_0(2, 1; \psi)|_2=0.\]
Indeed, the Duffin--Schaeffer counter example provides us with a
function $\psi$, such that the set
\begin{equation*}
  \mathcal{DS} = \left\{ y \in \mathbb{R} : \abs{q y - p} < \psi(q)
    \text{ for infinitely many $p,q \in \mathbb{Z}$}\right\}
\end{equation*}
is Lebesgue null, while the sum $\sum \psi(r) = \infty$. Using this
function as a $\psi$ in the definition of $W_0(2,1;\psi)$ and assuming
the measure of the latter set to be positive, using the ideas below in
the proof of Theorem \ref{thmmgn}, this will imply that $\mathcal{DS}$
has positive Lebesgue measure.

For $m\leq n$ the conditions on the dimension function in Theorem
\ref{thmmgn} change. This change is due to the fact that if $X\in
W_0(m,n;\psi)$ and $m\le n$ then a linear system of equations given by
$X$ is over-determined and the set of solutions lies in a subset of
strictly lower dimension than $mn$. Hence, the corresponding set of
$\psi$-approximable systems of forms will concentrate on a lower
dimensional surface.  This is proved in \cite{mhjl} where it is shown
that for $m\leq n$ the set $W_0\left( m,n;\psi \right)$ lie on a
$(m-1)(n+1)$--dimensional hypersurface $\Gamma$.  Therefore,
naturally, we expect the analogue of Theorem \ref{thmmgn} holds on
$\Gamma.$

\begin{thm}\label{thmmlen}
  Let $2< m\leq n$ and $\psi$ be an approximating function. Let $f$,
  $r^{-n^2}f(r)$, $ r^{-(m-n-1)n}f(r)$ and $r^{-(n-m+1)(m-1)}f(r)$ be
  dimension functions such that $r^{-(m-1)(n+1)}f(r)$ is monotonic.
  Then
  \[
  \mcal{H}^f(W_0(m,n;\psi))=
  0 \ \ {\rm {if}} \ \
  \sum_{r=1}^{\infty}f(\Psi
  (r))\Psi(r)^{-(m-1)n}r^{m-1}<\infty.
  \]
  On the other hand, if
  \[\sum_{r=1}^{\infty}f(\Psi(r))\Psi(r)^{-(m-1)n}r^{m-1}=\infty,\]
  then
  \[\mcal{H}^f(W_0(m,n;\psi))  = \left\{
    \begin{array}{cl}
      \infty &
      {\rm \ if} \  r^{-(m-1)(n+1)}f(r)\to\infty\  \text{as}\
      r\to{}0,\\
      &\\
      K & {\rm \ if} \  r^{-(m-1)(n+1)}f(r)\to C\
      \text{as}\  r\to0,
    \end{array}\right.\]
  for some fixed constant  $0\leq C<\infty$, where $0<K<\infty$.
\end{thm}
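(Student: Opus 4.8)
The plan is to reduce Theorem \ref{thmmlen} to Theorem \ref{thmmgn} by working on the hypersurface $\Gamma$ on which $W_0(m,n;\psi)$ concentrates when $m\le n$. Recall from \cite{mhjl} that $X\in W_0(m,n;\psi)$ forces the rows of $X$ to satisfy an over-determined linear system, so that $W_0(m,n;\psi)\subseteq\Gamma$ where $\Gamma$ is a $(m-1)(n+1)$-dimensional hypersurface which is (locally) the graph of a real-analytic map; parametrize $\Gamma$ by the coordinates that are genuinely free. First I would set up this parametrization explicitly and check that it is bi-Lipschitz on compact pieces, so that Hausdorff measures $\mcal{H}^f$ computed on $\Gamma$ and on the parameter space $\mb{I}^{(m-1)(n+1)}$ are comparable, with the usual care that under a Lipschitz map $\mcal{H}^f$ behaves well only when $f$ is, say, $r^{-d}f(r)$ monotonic for the ambient dimension $d$. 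This is exactly why the dimension-function hypotheses in Theorem \ref{thmmlen} differ from those in Theorem \ref{thmmgn}: besides $r^{-n^2}f(r)$ and $r^{-(m-n-1)n}f(r)$ one now needs $r^{-(n-m+1)(m-1)}f(r)$ to be a dimension function and $r^{-(m-1)(n+1)}f(r)$ (rather than $r^{-mn}f(r)$) monotonic, the exponent $(m-1)(n+1)$ being the dimension of $\Gamma$.

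Next I would transfer the covering/ubiquity machinery. For the convergence half this is immediate: the convergence statement is already known from \cite[Theorem 4]{mhjl} and is free of any assumptions, so nothing new is needed there. For the divergence half, the image in parameter space of the natural cover of $W_0(m,n;\psi)$ by neighbourhoods of rational hyperplanes should, after the bi-Lipschitz change of variables, look like the cover one uses to prove the divergence part of Theorem \ref{thmmgn} with the ambient dimension $mn$ replaced by $(m-1)(n+1)$; the relevant volume/sidelength bookkeeping changes the exponent of $\Psi(r)$ from $-(m-1)n$ in the ``lost'' directions correspondingly, but one checks that the series governing the dichotomy is unchanged, namely $\sum_r f(\Psi(r))\Psi(r)^{-(m-1)n}r^{m-1}$. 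Concretely I would verify that the local product structure of the sets $\{X:|\mbf qX|<\psi(|\mbf q|)\}$ — a union of thin neighbourhoods of affine subspaces with $n$ ``short'' directions of length $\asymp\Psi(|\mbf q|)$ and the remaining $(m-1)n$ directions free inside $\Gamma$ — matches the product structure exploited in the proof of Theorem \ref{thmmgn}, and then invoke that proof verbatim on $\Gamma$.

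Finally, the last clause of the theorem — the trichotomy $\infty$ versus a finite constant $K$ in the divergence case according to whether $r^{-(m-1)(n+1)}f(r)\to\infty$ or $\to C<\infty$ — is a soft consequence once the measure statement on $\Gamma$ is in hand. If $r^{-(m-1)(n+1)}f(r)\to\infty$ then $\mcal{H}^f(\Gamma)=\infty$, and since $W_0(m,n;\psi)$ has full $\mcal{H}^f$-measure in $\Gamma$ in the divergence regime (this is the content of the transferred Theorem \ref{thmmgn}), $\mcal{H}^f(W_0(m,n;\psi))=\infty$. If instead $r^{-(m-1)(n+1)}f(r)\to C$ with $0\le C<\infty$, then $\mcal{H}^f$ restricted to $\Gamma$ is a finite measure comparable to $C\cdot\mcal{H}^{(m-1)(n+1)}$ (a genuine constant multiple when $C>0$, and still finite when $C=0$ by the monotonicity hypothesis controlling the comparison), and the full-measure statement then gives $\mcal{H}^f(W_0(m,n;\psi))=K$ for a fixed $0<K<\infty$ depending only on $m,n$ and the limiting constant; a zero-one law / invariance argument under rational translations rules out intermediate values and pins $K$ down.

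The main obstacle I expect is the bi-Lipschitz parametrization step together with the tracking of how $\mcal{H}^f$ transforms: one must be careful that $\Gamma$ is only locally a Lipschitz graph (the map degenerates where the relevant minors of $X$ vanish), so the argument has to be localized to finitely many charts and the exceptional lower-dimensional locus where the parametrization breaks down must be shown to be $\mcal{H}^f$-null — this is where the extra hypothesis that $r^{-(n-m+1)(m-1)}f(r)$ be a dimension function does its work. Once the geometry of $\Gamma$ is under control, the rest is a faithful repetition of the proof of Theorem \ref{thmmgn} with the ambient dimension adjusted.
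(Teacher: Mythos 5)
Your skeleton agrees with the paper's (admittedly brief) argument: contain $W_0(m,n;\psi)$ in the $(m-1)(n+1)$-dimensional surface $\Gamma$ coming from the forced linear dependence, pass to a non-degenerate parametrization by a bi-Lipschitz map, and rerun the $m>n$ machinery there, with details as in \cite{mhjl}. However, there is a genuine gap at the point where you claim that, after the change of variables, the proof of Theorem \ref{thmmgn} applies ``verbatim'' and yields that $W_0(m,n;\psi)$ has \emph{full} $\mcal{H}^f$-measure inside $\Gamma$ in the divergence regime. This fullness is not what the method gives: the lower-bound construction produces only a bi-Lipschitz image of a product set (linear combinations of the independent vectors from the $m>n$ situation), and this image does not fill $\Gamma$. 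The paper is explicit about this in its concluding remarks -- the result on $\Gamma$ is a ``zero--positive'' law, and the ``zero--full'' law you invoke is stated there as an open conjecture. Positive measure inside $\Gamma$ is enough for the finite case (it sandwiches $\mcal{H}^f(W_0(m,n;\psi))$ between $0$ and the finite measure of $\Gamma\cap\mb{I}^{mn}$, giving $0<K<\infty$), but your derivation of the infinite case collapses: from a positive-measure subset you cannot conclude $\mcal{H}^f(W_0(m,n;\psi))=\infty$, and your route via $\mcal{H}^f(\Gamma)=\infty$ plus fullness rests precisely on the unproven full-measure claim. The correct route, as in the paper, is to run the slicing argument (Lemma \ref{slicing}, in the shape of Lemma \ref{lem:infmeslem}) in the non-degenerate parametrized setting to force the value $\infty$ directly.

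A second, related weakness is the assertion that the resonant structure inside $\Gamma$ ``matches the product structure exploited in the proof of Theorem \ref{thmmgn}'' with the same series. In the natural parametrization of $\Gamma$ by the $m-1$ independent rows $Y$ and the dependency coefficients $t\in\mb{R}^{m-1}$, the inequality $|\mbf{q}X|<\psi(|\mbf{q}|)$ essentially pins the vector $t$ to a ball of radius comparable to $\Psi(|\mbf{q}|)$ around a rational point, with $Y$ almost free; this is a simultaneous-approximation geometry (reflected in the Lebesgue-case series $\sum_r\psi(r)^{m-1}$ of Corollary \ref{kgmleqn}), not the small-linear-forms geometry of the $m>n$ case, so the bookkeeping cannot be taken over verbatim and has to be redone -- which is exactly why the paper (following \cite{mhjl}) introduces a further bi-Lipschitz map to a non-degenerate setting and then repeats the Lemma \ref{lem:bi-lip}/Khintchine--Groshev reduction and the slicing step there, rather than quoting Theorem \ref{thmmgn} as a black box on $\Gamma$.
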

Note that for a dimension function $f$ which satisfies
$r^{-(m-1)(n+1)}f(r)\to{}C>0$ as $r\to{}0$ the measure $\mathcal{H}^f$
is comparable to standard $(m-1)(n+1)$-dimensional Lebesgue measure
and in the case when $f(r)=r^{(m-1)(n+1)}$, we obtain the following
analogue of the Khintchine-Groshev theorem.
\begin{cor}\label{kgmleqn}
  \noindent Suppose $2<m\leq n$ and assume that the conditions of
  Theorem~\ref{thmmlen} hold for the dimension function
  $f(r):=r^{(m-1)(n+1)}$. Then
  \[
  |W_0 (m,n;\psi)|_{(m-1)(n+1)}=\left\{\begin{array}{ll}
      0 & {\rm \ if}\ \ \sum_{r=1}^{\infty}\psi (r)^{m-1}<\infty, \\
      &\\
      K & {\rm \ if}\ \ \sum_{r=1}^{\infty}\psi (r)^{m-1}=\infty,
    \end{array}\right.
  \]
  where $0<K<\infty$.
\end{cor}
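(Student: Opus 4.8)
The plan is to obtain Corollary~\ref{kgmleqn} as a direct specialisation of Theorem~\ref{thmmlen} to the dimension function $f(r):=r^{(m-1)(n+1)}$, so essentially all the work is bookkeeping: checking that this $f$ satisfies the hypotheses of the theorem, and translating the abstract convergence/divergence series into the clean series $\sum_r \psi(r)^{m-1}$. First I would verify the hypotheses. For $f(r)=r^{(m-1)(n+1)}$ one has $r^{-(m-1)(n+1)}f(r)\equiv 1$, which is certainly monotonic (and tends to the constant $C=1>0$ as $r\to 0$), so we land in the second case of the dichotomy and $K$ is indeed a fixed finite positive constant. The auxiliary functions are $r^{-n^2}f(r)=r^{(m-1)(n+1)-n^2}$, $r^{-(m-n-1)n}f(r)=r^{(m-1)(n+1)-(m-n-1)n}$ and $r^{-(n-m+1)(m-1)}f(r)=r^{(m-1)(n+1)-(n-m+1)(m-1)}$; each is of the form $r^{\alpha}$ and is a dimension function precisely when its exponent $\alpha$ is positive, so I would record the elementary inequalities $(m-1)(n+1)>n^2$, $(m-1)(n+1)>(m-n-1)n$ and $(m-1)(n+1)>(n-m+1)(m-1)$ that must be imposed, noting that the hypothesis ``the conditions of Theorem~\ref{thmmlen} hold for $f(r)=r^{(m-1)(n+1)}$'' is exactly the assumption that these hold (the middle one is automatic since $m\le n$ forces $m-n-1<0$, and the last simplifies to $n+1>n-m+1$, i.e. $m>0$; only the first, $(m-1)(n+1)>n^2$, is a genuine restriction).

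Next I would compute the series appearing in Theorem~\ref{thmmlen} with this $f$. Recall $\Psi(r)=\psi(r)/r$, so
\[
f(\Psi(r))\,\Psi(r)^{-(m-1)n}\,r^{m-1}
=\Psi(r)^{(m-1)(n+1)}\Psi(r)^{-(m-1)n}r^{m-1}
=\Psi(r)^{m-1}r^{m-1}
=\left(\tfrac{\psi(r)}{r}\right)^{m-1}r^{m-1}
=\psi(r)^{m-1}.
\]
Hence the convergence (resp. divergence) of $\sum_{r=1}^\infty f(\Psi(r))\Psi(r)^{-(m-1)n}r^{m-1}$ is literally the convergence (resp. divergence) of $\sum_{r=1}^\infty \psi(r)^{m-1}$. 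Plugging this into Theorem~\ref{thmmlen}: in the convergent case $\mcal{H}^f(W_0(m,n;\psi))=0$; since for this $f$ the measure $\mcal{H}^f$ is comparable to $(m-1)(n+1)$-dimensional Lebesgue measure on the hypersurface $\Gamma$, this gives $|W_0(m,n;\psi)|_{(m-1)(n+1)}=0$. In the divergent case, because $r^{-(m-1)(n+1)}f(r)\to 1$, we are in the ``$K$'' branch with $0<K<\infty$, and the comparability of $\mcal{H}^f$ with Lebesgue measure converts this into $0<|W_0(m,n;\psi)|_{(m-1)(n+1)}<\infty$, which is the asserted statement (absorbing the comparability constant into $K$).

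The only genuinely non-routine point — and the ``obstacle'' such as it is — is the passage from the Hausdorff-measure statement to the Lebesgue-measure statement, i.e. justifying that for $f(r)=r^{(m-1)(n+1)}$ the measure $\mcal{H}^f$ restricted to $W_0(m,n;\psi)\subset\Gamma$ is comparable to the intrinsic $(m-1)(n+1)$-dimensional Lebesgue measure on the hypersurface $\Gamma$. This is standard once one knows (from \cite{mhjl}) that $\Gamma$ is a bi-Lipschitz image of a piece of $\mb{R}^{(m-1)(n+1)}$, since bi-Lipschitz maps distort $\mcal{H}^{(m-1)(n+1)}=\mcal{H}^f$ by bounded factors; I would simply cite this structure of $\Gamma$ and the general fact that $\mcal{H}^k$ agrees with $k$-dimensional Lebesgue measure up to a normalising constant. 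Everything else is the arithmetic of exponents carried out above, so the corollary follows immediately.
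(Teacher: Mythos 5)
Your proposal is correct and follows essentially the same route as the paper: the corollary is obtained by specialising Theorem~\ref{thmmlen} to $f(r)=r^{(m-1)(n+1)}$, noting that the series collapses to $\sum_r\psi(r)^{m-1}$ and that $\mcal{H}^{(m-1)(n+1)}$ is comparable to $(m-1)(n+1)$-dimensional Lebesgue measure on the hypersurface $\Gamma$. One small caveat: the condition you call ``a genuine restriction'', namely $(m-1)(n+1)>n^2$, in fact never holds for $m\le n$ (since $(m-1)(n+1)\le n^2-1$), so the requirement that $r^{-n^2}f(r)$ be a dimension function cannot be met literally for this $f$; this is a wrinkle in the paper's own hypotheses, which the corollary's phrasing sidesteps by assumption, rather than a flaw in your argument.
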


\section{Machinery}

The machinery required for the proofs of both the theorems is the
Mass Transference Principle along with `slicing' technique. We
merely state the results and refer the reader to \cite {slicing} for
further details.

\subsection{Hausdorff Measure and Dimension}
\label{hm}

Below is a brief introduction to Hausdorff $f$--measure and dimension.
For further details see \cite{berdod, Falc2}.  Let $F\subset
\mb{R}^n$.  For any $\rho>0$ a countable collection $\{B_i\}$ of balls
in $\mb{R}^n$ with diameters $\mathrm{diam} (B_i)\le \rho$ such that
$F\subset \bigcup_i B_i$ is called a $\rho$--cover of $F$.  Define
\[
\mcal{H}_\rho^f(F)=\inf \sum_if(\mathrm{diam}(B_i)),
\]
where the infimum is taken over all possible $\rho$--covers of $F$.
The Hausdorff $f$--measure of $F$ is
\[
\mcal{H}^f(F)=\lim_{\rho\to 0}\mcal{H}_\rho^f(F).
\]
In the particular case when $f(r)=r^s$ with $ s>0$, we write
$\mcal{H}^s$ for $\mcal{H}^f$ and the measure is referred to as
$s$--dimensional Hausdorff measure. The Hausdorff dimension of $F$ is
denoted by $\dim F $ and is defined as
\[
\dim F :=\inf\{s\in \mb{R}^+\;:\; \mcal{H}^s(F)=0\}.
\]

\subsection{Slicing}\label{sslicing}

We now state a result which is the key ingredient in the proof of
Theorems \ref{thmmgn} and \ref{thmmlen}. The result was used in
\cite{slicing} to prove the Hausdorff measure version of the W. M.
Schmidt's inhomogeneous linear forms theorem in metric number theory.
The authors refer to the technique as `slicing'.  Before we state the
result it is necessary to introduce a little notation.

Suppose that $V$ is a linear subspace of $\mb{R}^k$, $V^{\perp}$ will
be used to denote the linear subspace of $\mb{R}^k$ orthogonal to $V$.
Further $V+a:=\left\{v+a:v\in V\right\}$ for $a\in V^{\perp}$.

\medskip

\begin{lem}\label{slicing}
  Let $l, k \in \mb{N}$ be such that $l\leq k$ and let $f
  \;\textrm{and}\; g:r\to r^{-l}f(r)$ be dimension functions. Let
  $B\subset \mb{I}^k$ be a \emph{Borel} set and let $V$ be a
  $(k-l)$--dimensional linear subspace of $\mb{R}^k$. If for a subset
  $S$ of $V^{\perp}$ of positive $\mcal{H}^l$
  measure$$\mcal{H}^{g}\left(B\cap(V+b)\right)=\infty \hspace{.5cm}
  \forall \; b\in S,$$ \noindent then $\mcal{H}^{f}(B)=\infty.$
\end{lem}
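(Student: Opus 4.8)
The plan is to reduce the claimed ``slicing'' statement to a covering argument, following the strategy of \cite{slicing}. Fix a $\rho$--cover $\{B_i\}$ of $B$; the goal is to show $\sum_i f(\diam B_i)$ is infinite, so it suffices to produce, for each such cover, a lower bound on this sum that blows up. First I would decompose $\mb{R}^k$ orthogonally as $V\oplus V^{\perp}$, so that each point $x\in\mb{R}^k$ is written $x=(v,b)$ with $v\in V$ and $b\in V^{\perp}$; under this identification $B\cap(V+b)$ is (a translate of) a subset of the $(k-l)$--dimensional space $V$. For a ball $B_i$ of diameter $\diam B_i$, its intersection with the affine slice $V+b$ is contained in a ball of $V$ of diameter at most $\diam B_i$, and crucially the set of $b\in V^{\perp}$ for which $B_i\cap(V+b)\neq\emptyset$ is contained in a ball of $V^{\perp}$ of diameter at most $\diam B_i$, hence has $\mcal H^l$--measure $\ll (\diam B_i)^l$.

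Next I would integrate over the slicing parameter. For each fixed $b\in S$, the slices $\{B_i\cap(V+b)\}$ form a $\rho$--cover of $B\cap(V+b)$ inside $V$, so by definition of Hausdorff $g$--measure and the hypothesis,
\[
\sum_{i:\,B_i\cap(V+b)\neq\emptyset} g(\diam B_i)\;\geq\;\mcal H^{g}_\rho\bigl(B\cap(V+b)\bigr),
\]
and letting $\rho\to0$ the right-hand side tends to $\mcal H^g(B\cap(V+b))=\infty$ for every $b\in S$. To turn this into a statement about $\sum_i f(\diam B_i)$, recall $g(r)=r^{-l}f(r)$, so $f(\diam B_i)=(\diam B_i)^l\,g(\diam B_i)$, and $(\diam B_i)^l$ is (up to a constant) an upper bound for the $\mcal H^l$--measure of the set of admissible $b$. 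Integrating the slice inequality against $b$ over $S$ and using Fubini to swap the sum and the integral, the contribution of each ball $B_i$ is bounded above by $\mcal H^l(\{b\in V^\perp: B_i\cap(V+b)\neq\emptyset\})\cdot g(\diam B_i)\ll f(\diam B_i)$, while the left side is $\int_S\bigl(\sum_i g(\diam B_i)\bigr)\,d\mcal H^l(b)$, which in the limit $\rho\to0$ would be a positive-measure integral of functions tending to $\infty$. Making this quantitative — passing from the genuinely infinite slice measures to a uniform lower bound on a subset of $S$ of positive measure — is the delicate point; the standard device is to use that $\mcal H^g(B\cap(V+b))=\infty$ together with a truncation/$\sigma$-finiteness argument (or Egorov/Fatou applied to $\mcal H^g_\rho$ as $\rho\to0$) to extract, for any $N$, a subset $S_N\subset S$ of $\mcal H^l$--measure bounded below and a $\rho_N$ with $\mcal H^g_{\rho_N}(B\cap(V+b))\geq N$ for all $b\in S_N$.

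With that in hand, for any $\rho\leq\rho_N$ and any $\rho$--cover $\{B_i\}$ of $B$ one gets
\[
\sum_i f(\diam B_i)\;\gg\;\int_{S_N}\sum_i g(\diam B_i)\,d\mcal H^l(b)\;\geq\;\mcal H^l(S_N)\cdot N\;\gg\;N,
\]
so $\mcal H^f_\rho(B)\gg N$ for all small $\rho$, whence $\mcal H^f(B)\geq\lim_{\rho\to0}\mcal H^f_\rho(B)=\infty$ since $N$ was arbitrary. The Borel hypothesis on $B$ is what guarantees the slices $B\cap(V+b)$ are measurable and that the Fubini-type interchange is legitimate; without it the argument fails. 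The main obstacle, as indicated, is the passage from the pointwise statement ``$\mcal H^g(\text{slice})=\infty$ for each $b\in S$'' to a uniform-in-$b$ lower bound on a positive-measure subset at a fixed scale $\rho$ — everything else is bookkeeping with the relation $f(r)=r^l g(r)$ and the elementary geometry of how a ball meets a family of parallel affine subspaces.
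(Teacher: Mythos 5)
Note first that the paper itself does not prove Lemma \ref{slicing}: it is quoted verbatim from \cite{slicing}, where Beresnevich and Velani in turn deduce it from a classical slicing inequality for Hausdorff measures (of Marstrand--Mattila type), namely that $\int^{*}_{V^{\perp}}\mcal{H}^{g}\bigl(B\cap(V+b)\bigr)\,d\mcal{H}^{l}(b)\ll\mcal{H}^{f}(B)$ for $g(r)=r^{-l}f(r)$. Your covering-and-Fubini computation is exactly the proof of that inequality, so in substance you are reproving the right result; the difference is organisational. The standard route runs contrapositively: assume $\mcal{H}^{f}(B)<\infty$, take near-optimal $\rho$-covers, integrate the slice covers over $b\in V^{\perp}$ using $\mcal{H}^{l}\{b: B_i\cap(V+b)\neq\emptyset\}\ll(\diam B_i)^{l}$ and $f(r)=r^{l}g(r)$, and then let $\rho\to 0$ via Fatou (or monotone convergence, since $\mcal{H}^{g}_{\rho}$ increases as $\rho$ decreases); this gives finiteness of $\mcal{H}^{g}$ of almost every slice and contradicts the hypothesis, with no uniformity issue ever arising. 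Your direct route, by contrast, must convert the pointwise statement ``$\mcal{H}^{g}(\text{slice})=\infty$ on $S$'' into a bound $\mcal{H}^{g}_{\rho_N}(\text{slice})\geq N$ on a subset of $S$ of measure bounded below independently of $N$ --- the step you rightly flag as delicate. It does go through: since $\mcal{H}^{g}_{\rho}$ is nondecreasing as $\rho\downarrow 0$, the sets $S_{N,j}=\{b\in S:\mcal{H}^{g}_{1/j}(B\cap(V+b))\geq N\}$ increase to $S$, and continuity from below for the (regular) outer measure $\mcal{H}^{l}$ yields some $j$ with $\mcal{H}^{l}(S_{N,j})\geq\tfrac12\min\{1,\mcal{H}^{l}(S)\}$, a bound independent of $N$; measurability worries are avoided by integrating the Borel majorant $h_{\rho}(b)=\sum_{i:B_i\cap(V+b)\neq\emptyset}g(\diam B_i)$ rather than $\mcal{H}^{g}_{\rho}$ of the slice itself. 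So your argument is correct once that extraction is written out, but as it stands that key step is only sketched; the contrapositive/Fatou formulation used in the literature buys you the same conclusion with the uniformisation (and most of the measurability bookkeeping) eliminated, which is why the cited sources state the result as an integral inequality rather than in your ``every cover has large $f$-sum'' form.
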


\subsection{A Hausdorff measure version of Khintchine--Groshev
  theorem}

As an application of the mass transference principle for system of
linear forms developed in \cite {slicing} the Hausdorff measure
version of the Khintchine--Groshev theorem is established without the
monotonic assumption on the approximating function in \cite[Theorem
15]{BBDV}. The additional assumption that $\psi$ is monotonic was
assumed in \cite{BBDV} for the case $m=2$, but subsequently removed in
\cite{bvclassical}.

\begin{thm}\label{hmkg}
  Let $\psi$ be an approximating function and $m+n>2$. Let $f$ and
  $r^{-(m-1)n}f(r)$ be dimension functions such that $r^{-mn}f(r)$ is
  monotonic. Then, $$ \mcal{H}^f\left( W\left(m,n;\psi\right) \right)
  = \left\{
    \begin{array}{cl}
      0& {\rm \ if}  \qquad\displaystyle
      \sum \limits_{r=1}^{\infty}f(\Psi
      (r))\Psi(r)^{-(m-1)n}r^{m+n-1}<\infty \, \\
      \mcal{H}^f(\mb{I}^{mn})  & { \rm \ if}  \qquad \displaystyle
      \sum_{r=1}^{\infty} \
      f(\Psi
      (r))\Psi(r)^{-(m-1)n}r^{m+n-1}=\infty\, .
    \end{array}
  \right.
  $$
\end{thm}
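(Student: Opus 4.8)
The plan is to treat the two halves by completely different means. The convergence statement carries no hypothesis beyond $r^{-(m-1)n}f(r)$ being a dimension function and follows from a direct covering argument. Write $W(m,n;\psi)=\limsup_{\mbf q}\Delta(\mbf q)$ with $\Delta(\mbf q)=\{X\in\mb I^{mn}:\norm{\mbf qX_j}<\psi(|\mbf q|)\ \text{for}\ 1\le j\le n\}$, where $X_j$ denotes the $j$-th column of $X$. For a fixed $\mbf q$ with $|\mbf q|=r$, the set $\{X_j\in\mb I^m:\norm{\mbf q\cdot X_j}<\psi(r)\}$ lies in a union of $\ll r$ slabs, each of width $\asymp\Psi(r)$ in one direction and of unit extent in the remaining $m-1$ directions, and each such slab is covered by $\ll\Psi(r)^{-(m-1)}$ balls of radius $\Psi(r)$. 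Since $\Delta(\mbf q)$ is a product over the $n$ columns, it is covered by $\ll r^{n}\Psi(r)^{-(m-1)n}$ balls of radius $\asymp\Psi(r)$ in $\mb R^{mn}$; the hypothesis that $r^{-(m-1)n}f(r)$ be increasing is precisely what makes this scale the efficient one. Summing $f(\mathrm{diam})$ over the $\ll r^{m-1}$ vectors with $|\mbf q|=r$ and over $r\ge N$ gives
\[
\mcal H^f_{\rho(N)}\bigl(W(m,n;\psi)\bigr)\ll\sum_{r\ge N}r^{m+n-1}\Psi(r)^{-(m-1)n}f(\Psi(r))\longrightarrow 0\quad(N\to\infty)
\]
under the convergence assumption, whence $\mcal H^f(W(m,n;\psi))=0$.

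For the divergence half the plan is to reduce to the Lebesgue Khintchine--Groshev theorem by means of the Mass Transference Principle for systems of linear forms, that is, the machinery of \cite{slicing} of which Lemma~\ref{slicing} is the essential device. The set $W(m,n;\psi)$ is a $\limsup$ of anisotropic neighbourhoods of the codimension-$n$ affine varieties $\{X:\mbf qX_j=p_j\}$: such a ``plate'' has width $\asymp\Psi(|\mbf q|)$ in $n$ transverse directions and unit extent in the other $(m-1)n$, so its $\mcal H^f$-content is $\asymp\Psi(r)^{-(m-1)n}f(\Psi(r))$, while a plate of width $\delta$ has Lebesgue $mn$-content $\asymp\delta^n$. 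Matching these contents leads to the auxiliary approximating function
\[
\Theta(r):=\psi(r)\,h(\Psi(r))^{1/n},\qquad h(r):=r^{-mn}f(r),
\]
which reduces to $\Theta=\psi$ when $f(r)=r^{mn}$, and a short computation shows that the divergence hypothesis $\sum_r r^{m+n-1}\Psi(r)^{-(m-1)n}f(\Psi(r))=\infty$ is exactly the Khintchine--Groshev divergence condition $\sum_r r^{m-1}\Theta(r)^n=\infty$ for $\Theta$. The monotonicity of $h(r)=r^{-mn}f(r)$ is what legitimises $\Theta$ as an approximating function (after the harmless truncation $\Theta\mapsto\min\{\Theta,1\}$, which does not affect divergence of the series) and what licenses the transference step. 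Since $m+n>2$, the Lebesgue Khintchine--Groshev theorem without any monotonicity assumption on $\Theta$ (Gallagher \cite{Gal} for $m=1$, Beresnevich--Velani \cite{bvclassical} for $m=2$, Schmidt \cite{schmidt1} or Sprindz\v{u}k \cite{sprindzuk} for $m\ge3$) yields $|W(m,n;\Theta)|_{mn}=1$, and the Mass Transference Principle then upgrades this to $\mcal H^f(W(m,n;\psi))=\mcal H^f(\mb I^{mn})$.

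The main obstacle lies entirely in the divergence half: formulating and applying the transference principle in the \emph{anisotropic} setting of hyperplane neighbourhoods rather than balls. This is where Lemma~\ref{slicing} does the work --- one slices $\mb I^{mn}$ by a suitable family of parallel affine subspaces, applies the ball form of the transference principle (or an inductive hypothesis) on almost every slice with the sliced dimension function $g(r)=r^{-l}f(r)$, and reassembles via the lemma, the admissible values of the slicing codimension $l$ being the source of the side conditions ``$r^{-\ast}f(r)$ is a dimension function''. Verifying these bookkeeping points and the non-degeneracy of $\Theta$ is routine but slightly technical. As this is precisely \cite[Theorem~15]{BBDV} together with the removal of the $m=2$ monotonicity assumption in \cite{bvclassical}, the write-up can simply quote those two references; the sketch above records why the stated hypotheses are the natural ones and how the series in the statement arises.
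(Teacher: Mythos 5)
The paper offers no proof of this statement at all: it is quoted as known machinery, attributed to \cite[Theorem 15]{BBDV} with the removal of the $m=2$ monotonicity assumption coming from \cite{bvclassical}. Your proposal ultimately rests on citing exactly these sources, and your sketch (standard covering argument for convergence; mass transference/slicing plus the non-monotonic Lebesgue Khintchine--Groshev theorem for divergence, with the auxiliary function $\Theta$ correctly reproducing the stated series) is an accurate account of how those references establish it, so this is essentially the same approach as the paper.
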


\noindent Theorem \ref {hmkg} along with Lemma \ref{slicing} will be
used to prove the infinite measure case of Theorem \ref{thmmgn}.

\section{Proof of Theorem \ref{thmmgn}}
\label{sec:divergence}

As stated earlier the condition $r^{-mn}f(r)$ is not a restrictive
condition. The statement of the Theorem essentially reduces to two
cases, finite measure case, \emph{i.e.}, when $r^{-mn}f(r)\to C>0$ as
$r\to 0$ and to the infinite measure case which corresponds to
$r^{-mn}f(r)\to \infty$ as $r\to 0$. Therefore, we split the proof of
the Theorem \ref{thmmgn} into two parts, the finite measure case and
the infinite measure case.

Before proceeding, we will need the following key lemma, which will
make our proofs work.

\begin{lem}
  \label{lem:bi-lip}
  Let $S \subseteq \mat_{(m-n)\times n}(\mathbb{R})$ of full Lebesgue
  measure. Let $A \subseteq \GL_{n\times n}(\mathbb{R})$ be a set of
  positive Lebesgue measure. Then, the set
  \begin{equation*}
    \Lambda = \left\{
      \begin{pmatrix}
        X \\
        XY
      \end{pmatrix}
      \in \mat_{m \times n}(\mathbb{R}) : X \in A, Y \in S \right\}
  \end{equation*}
  has full Lebesgue measure inside $A \times S$.
\end{lem}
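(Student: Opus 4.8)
The statement is essentially about the map that glues together a block $X$ and a product $XY$ into an $m\times n$ matrix, and we want to show that its image is large. The cleanest approach is to exhibit $\Lambda$ as the image of $A\times S$ under a map that is \emph{bi-Lipschitz onto its image} (or at least locally bi-Lipschitz), so that it preserves sets of full measure. First I would set up coordinates: write a point of $\mat_{m\times n}(\mathbb{R})$ in block form $\binom{U}{V}$ with $U\in\mat_{n\times n}$ and $V\in\mat_{(m-n)\times n}$, and consider the map $\Phi:(X,Y)\mapsto\binom{X}{XY}$ from $A\times S\subseteq\GL_{n\times n}\times\mat_{(m-n)\times n}$ into $\mat_{m\times n}$. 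On the open set where the top block is invertible, $\Phi$ has an explicit inverse: given $\binom{U}{V}$ with $U\in\GL_{n\times n}$, recover $X=U$ and $Y=U^{-1}V$. So $\Phi$ is a bijection from $A\times S$ onto $\Lambda$, with inverse $\binom{U}{V}\mapsto(U,U^{-1}V)$.

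Next I would check that $\Phi$ is locally bi-Lipschitz. Both $\Phi$ and its inverse are given by polynomial (respectively rational, with nonvanishing denominator $\det U$) expressions in the entries, hence $C^1$ on the relevant open sets; on any compact subset of $\GL_{n\times n}\times\mat_{(m-n)\times n}$ (resp.\ of $\{\det U\neq 0\}$) the Jacobians are bounded, so $\Phi$ is bi-Lipschitz there. Since bi-Lipschitz maps send null sets to null sets in both directions and preserve dimension, and since $A\times S$ has full measure in $A\times\mat_{(m-n)\times n}$ (as $S$ has full measure in $\mat_{(m-n)\times n}$), the complement of $\Lambda$ within $\Phi\big(A\times\mat_{(m-n)\times n}(\mathbb{R})\big)$ is the $\Phi$-image of a null set, hence null. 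It remains to observe that $\Phi\big(A\times\mat_{(m-n)\times n}\big)=\{\binom{U}{V}:U\in A\}$, which is exactly $A\times\mat_{(m-n)\times n}$ viewed inside $\mat_{m\times n}$; thus $\Lambda$ has full measure inside $A\times\mat_{(m-n)\times n}(\mathbb{R})$ and a fortiori inside $A\times S$.

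To make the compactness argument rigorous I would exhaust $A\times\mat_{(m-n)\times n}(\mathbb{R})$ by countably many compact sets $K_j$ on each of which $\det X$ is bounded away from $0$ and all entries are bounded; for instance take $K_j=\{X\in A:\tfrac1j\le|\det X|,\ \|X\|\le j\}\times\{Y:\|Y\|\le j\}$. On each $K_j$ the map $\Phi$ is bi-Lipschitz, so $\Phi(K_j\setminus(A\times S))=\Phi(K_j\cap(A\times(\mat_{(m-n)\times n}\setminus S)))$ is null; summing over $j$ gives that $\Phi\big((A\times\mat_{(m-n)\times n})\setminus(A\times S)\big)$ is null, which is what we need. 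Here I am using that $A$ has positive measure so that some $K_j$ carries positive measure, and that almost every $X\in A$ has $\det X\neq 0$ since $\GL_{n\times n}$ is co-null in $\mat_{n\times n}$.

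The only mild subtlety — and the part I would be most careful about — is bookkeeping with the ``full measure inside $A\times S$'' phrasing: one must be precise about which ambient space the measure is taken in (the affine coordinates on $\mat_{m\times n}$ versus the product coordinates on $A\times S$), and note that the two Lebesgue measures correspond under the identification $\binom{X}{V}\leftrightarrow(X,V)$ up to the bi-Lipschitz distortion of $\Phi$, which is harmless for null-set statements. No genuinely hard analytic input is required; the content is just that $Y\mapsto XY$ is an invertible linear change of variables for each fixed invertible $X$, done uniformly on compacta.
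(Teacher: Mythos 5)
Your proof is correct, but it takes a genuinely different route from the paper's. The paper argues by contradiction from a Lebesgue density point $Z$ of $(A\times S)\setminus\Lambda$: it covers $\Lambda\cap B(Z,\delta)$ by hypercubes of small total volume, averages over the top block (a Fubini-type argument with the indicators $\lambda_C$) to produce a single invertible slice $X_0$ whose fibre $B(X_0)$ has small $(m-n)n$-dimensional outer measure, and then transports this estimate through $X_0^{-1}$ to the cylinder $B(I_n)$ over $S$, contradicting the fullness of $S$. You instead observe that $\Phi:(X,Y)\mapsto\binom{X}{XY}$ has an explicit rational inverse $\binom{U}{V}\mapsto(U,U^{-1}V)$ on the open set where the top block is invertible, exhaust the domain by sets with bounded entries and $\lvert\det X\rvert$ bounded below (on which $\Phi$ is Lipschitz --- in fact only the forward Lipschitz bound is needed, not bi-Lipschitzness), and use that Lipschitz maps between spaces of equal dimension send null sets to null sets: since $S$ is full, $A\times S$ differs from $A\times\mat_{(m-n)\times n}(\mathbb{R})$ by a null set, whence $\Lambda=\Phi(A\times S)$ differs from the full cylinder $\Phi\bigl(A\times\mat_{(m-n)\times n}(\mathbb{R})\bigr)=A\times\mat_{(m-n)\times n}(\mathbb{R})$ by a null set, which is the assertion. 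Your route is shorter, isolates the real mechanism (for each fixed invertible $X$ the map $Y\mapsto XY$ is a linear isomorphism, uniformly on the exhaustion), and does not even use positivity of $\lvert A\rvert$; the paper's covering argument is more self-contained, avoiding any appeal to the Lipschitz/null-set principle. Two cosmetic remarks: your sets $K_j$ are not compact (as $A$ is merely measurable), but the Lipschitz estimate on them holds regardless, so nothing is lost; and, following the paper's own notation, the lower block should really be $YX$ for the dimensions to match (so the inverse recovers $Y=VU^{-1}$) --- this mirrors a typo in the statement itself and does not affect your argument.
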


\begin{proof}
  Without loss of generality, we will assume that $\abs{A}_{n^2} <
  \infty$.  If this is not the case, we will take a subset of $A$.
  Suppose now for a contradiction that $\abs{(A \times S) \setminus
    \Lambda} > 0$ and let $Z$ be a point of metric density for this
  set. We will show that the existence of such a point violates the
  condition that $S$ is full.

  Fix an $\epsilon > 0$. There is a $\delta > 0$ such that
  \begin{equation*}
    \frac{\abs{\Lambda \cap B(Z,
        \delta)}}{\abs{B(Z, \delta)}} < \frac{\epsilon}{2^{mn+1}},
  \end{equation*}
  where $B(Z, \delta)$ denotes the ball centred at $Z$ of radius
  $\delta$. By definition of the Lebesgue measure, we may take a cover
  $\mathcal{C}$ of $\Lambda \cap B(Z, \delta)$ by hypercubes in
  $\mathbb{R}^{mn}$ such that
  \begin{equation*}
    \sum_{C \in \mathcal{C}} \diam(C)^{mn} < \frac{\epsilon}{2^{mn}}
    \abs{B(Z, \delta)} = \epsilon \delta^{mn}.
  \end{equation*}
  The latter equality follows as we are working in the supremum norm,
  so that a ball of radius $\delta$ is in fact a hypercube of side
  length $2\delta$.  We let $A_0 \subseteq A$ be the set of those $X
  \in A$ for which there is a $Y \in S$ such that $\binom{X}{XY} \in
  B(Z, \delta)$.  Note that by Fubini's Theorem $A_0$ has positive
  Lebesgue measure. In fact, the measure is equal to $2^{n^2}
  \delta^{n^2}$.

  For any $X \in A_0$ we define the set
  \begin{equation*}
    B(X) = \left\{
      \begin{pmatrix}
        X \\
        XY
      \end{pmatrix}
      \in B(Z, \delta) : Y \in S \right\}.
  \end{equation*}
  Note that
  \begin{equation*}
    \mathcal{C}(X) =  \left\{ \left(
      \begin{pmatrix}
        X \\
        \mat_{(m-n) \times n} (\mathbb{R})
      \end{pmatrix} \cap C\right)
      \in \mat_{m \times n}(\mathbb{R}) : C \in \mathcal{C} \right\}.
  \end{equation*}
  is a cover of  $B(X)$ by $(m-n)n$-dimensional hypercubes.

  As in \cite{mh}, we define for each $C \in \mathcal{C}$ a function,
  \begin{equation*}
    \lambda_C(X) =
    \begin{cases}
      1 & \text{if } \left(
        \begin{pmatrix}
          X \\
          \mat_{(m-n) \times n} (\mathbb{R})
        \end{pmatrix} \cap C\right) \neq \emptyset \\
      0 & \text{otherwise.}
    \end{cases}
  \end{equation*}
  It is easily seen that
  \begin{equation*}
    \int_{A_0} \lambda_C(X) dX \leq \diam(C)^{n^2},
  \end{equation*}
  where the integral is with respect to the $n\times n$-dimensional
  Lebesgue measure. Also,
  \begin{equation*}
    \sum_{C \in \mathcal{C}(X)} \diam(C)^{(m-n)n} =
    \sum_{C \in \mathcal{C}} \lambda_C(X) \diam(C)^{(m-n)n}.
  \end{equation*}

  We integrate the latter expression with respect to $X$ to obtain
  \begin{multline*}
    \int_{A_0} \sum_{C \in \mathcal{C}(X)} \diam(C)^{(m-n)n} dX =
    \sum_{C \in \mathcal{C}}\int_{A_0} \lambda_C(X) dX
    \diam(C)^{(m-n)n} \\
    \leq \sum_{C \in \mathcal{C}} \diam(C)^{mn} < \epsilon \abs{B(Z,
      \delta)}.
  \end{multline*}
  Since the right hand side is an integral of a non-negative function
  over a set of positive measure, there must be an $X_0 \in A_0$ with
  \begin{equation}
    \label{eq:3}
    \sum_{C \in \mathcal{C}(X_0)} \diam(C)^{(m-n)n} <
    \frac{\epsilon \abs{B(Z, \delta)}}{\mu(A_0)} = \frac{\epsilon
      \delta^{mn}}{2^{n^2} \delta^{n^2}} = \frac{\epsilon}{2^{n^2}}
    \delta^{n(m-n)}.
  \end{equation}
  Indeed, otherwise
  \begin{equation*}
    \int_A \sum_{C \in \mathcal{C}(X)} \diam(C)^{(m-n)n} dX
    \geq \int_A \frac{\epsilon\abs{B(Z, \delta)}}{\mu(A)} dX =
    \epsilon.
  \end{equation*}

  We may now estimate the $(m-n)n$-dimensional measure of $B(X_0)$
  from above by this sum.  This gives an upper estimate on the measure
  of $B(I_n)$, as $X_0$ is invertible. Furthermore, this estimate can
  be made arbitrarily small. But $B(I_n)$ is a cylinder set over $S$,
  so this is a clear contradiction since $S$ was assumed to be full.
\end{proof}

In applications, we will apply Lemma \ref{lem:bi-lip} with the set $S$
being $W(m-n,n,;\psi)$. This set is however a subset of
$\mathbb{I}^{(m-n)n}$, and so not full within $\mat_{(m-n)\times
  n}(\mathbb{R})$. It is however invariant under tranlation by integer
vectors, so this causes no loss of generality.

\subsection{Finite measure}

In order to proceed, we will make some restrictions. Let $\epsilon >
0$ and $N > 0$ be fixed but arbitrary. It is to be understood that
$\epsilon$ will be small eventually and $N$ large. We will define a
set $A_{\epsilon, N}$ of $m \times n$-matrices which is smaller than
the whole, but which tends to the whole set as $\epsilon \rightarrow
0$.  As $\epsilon$ and $N$ are arbitrary, if we can prove that the
divergence assumption implies that $W_0(m,n; \psi)$ is full inside
$A_{\epsilon,N}$, this will give the full result.

For an $m\times n$-matrix $X$, let $\tilde{X}$ denote the $n \times
n$-matrix formed by the first $n$ rows. We will be considering a set
for which $\tilde{X}$ is invertible. Evidently, the exceptional set is
of measure zero within $\mathbb{R}^{mn}$. However, to make things
work, we will need to work with the set
\begin{equation*}
  A_{\epsilon, N} = \left\{X \in \mat_{m\times n}(\mathbb{R}) : \epsilon <
    \det(\tilde{X}) < \epsilon^{-1}, \quad \max_{1 \leq i,j\leq n}
    \abs{x_{ij}} \leq N \right\}
\end{equation*}
The set is of positive measure for $\epsilon$ small enough and $N$
large enough, and as $\epsilon$ decreases and $N$ increases, the set
fills up $\mat_{m\times n}(\mathbb{R})$ with the exception of the
null-set of matrices $X$ such that $\tilde{X}$ is singular.

We will translate the statement about small linear forms to one
about usual Diophantine approximation. This will allow us to
conclude from a Khintchine--Groshev theorem. We may rewrite the
$X$ as
\begin{equation*}
  X =
  \begin{pmatrix}
    \tilde{X} \\
    X'
  \end{pmatrix} =
  \begin{pmatrix}
    I_n \\
    \hat{X}
  \end{pmatrix} \tilde{X},
\end{equation*}
where $X'$ denotes the matrix consisting of the last $m-n$ rows of the
original matrix and $\hat{X}$ denotes the matrix $X' \tilde{X}^{-1}$.

Consider the set of $n$ linear forms in $m-n$ variables defined by the
matrix $\hat{X}$. Suppose furthermore that these linear forms satisfy
the inequalities
\begin{equation}
  \label{eq:1}
  \norm{\mathbf{r} \hat{X}}_i \leq
  \frac{\psi(\abs{\mathbf{r}})}{nN}, \quad 1 \leq i \leq n,
\end{equation}
for infinitely many $\mbf{r} \in \mathbb{Z}^{m-n} \setminus \{\mbf
0\}$, where $\norm{\mathbf{x}}_i$ denotes the distance from the $i$'th
coordinate of $\mathbf{x}$ to the nearest integer. A special case of
Khintchine--Groshev states, that the divergence condition of our
theorem implies that the set of such linear forms $\hat{X}$ is full
inside the set of $(m-n) \times n$-matrices, and hence in particular
also in the image of $A_{\epsilon, N}$ under the map sending $X$ to
$\hat{X}$.

Now, suppose that $X \in A_{\epsilon, N}$ is such that $\hat{X}$ is
in the set defined by \eqref{eq:1}. We claim that $X$ is in
$W_0(m,n:\psi)$. Indeed, let $\mathbf{r}_k$ be an infinite sequence
such that the inequalities \eqref{eq:1} are satisfied for each $k$,
and let $\mathbf{p}_k$ be the nearest integer vector to
$\mathbf{r}_k \hat{X}$. Now define $\mathbf{q}_k = (\mathbf{p}_k,
\mathbf{r}_k)$. The inequalities defining $W_0(m,n,\psi)$ will be
satisfied for these values of $\mathbf{q}_k$, since
\begin{equation*}
  \abs{\mathbf{q}_k X} = \abs{\mathbf{q}_k
  \begin{pmatrix}
    I_n \\
    \hat{X}
  \end{pmatrix} \tilde{X}} = \abs{(\pm \norm{r_k \hat{X}}_1, \dots,
  \pm \norm{r_k \hat{X}}_n) \tilde{X}}.
\end{equation*}
The $i$'th coordinate of the first vector is at most
$\psi(\abs{q})/nN$, so carrying out the matrix multiplication, using
the triangle inequality and the fact that $\abs{x_{ij}} \leq N$ for
$1 \leq i, j \leq n$ shows that
\begin{equation*}
  \abs{\mathbf{q}_k X}_i < \psi(\abs{\mathbf{q}_k}).
\end{equation*}
Applying Lemma \ref{lem:bi-lip}, the divergence part of Theorem
\ref{thmmgn} follows in the case of Lebesgue measure.

\subsection{Infinite measure}

The infinite measure case of the Theorem \ref{thmmgn} can be easily
deduced from the following lemma.

\begin{lem}
  \label{lem:infmeslem}
  Let $\psi$ be an approximating function and let $f$ and $g:r\to
  r^{-n^2}f(r)$ be dimension functions with $r^{-mn}f(r)\to\infty$ as
  $r\to{}0$. Further, let $r^{-(m-n-1)n}g(r)$ be a dimension function
  and $r^{-(m-n)n}g(r)$ be monotonic. If
  \[
  \sum \limits_{r=1}^{\infty}f(\Psi
  (r))\Psi(r)^{-(m-1)n}r^{m-1}=\infty,
  \]
  then
  \[
  \mcal{H}^{f}(W_0(m, n; \psi))=\infty.
  \]
\end{lem}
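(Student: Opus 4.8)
The plan is to reduce the statement to the finite (Lebesgue) measure case already obtained in the previous subsection, by combining the slicing lemma (Lemma \ref{slicing}) with the Hausdorff measure version of Khintchine--Groshev (Theorem \ref{hmkg}), exactly as advertised at the end of Section \ref{sslicing}. The key observation is the factorisation used in the finite measure case: writing $X=\binom{I_n}{\hat X}\tilde X$ with $\tilde X$ the invertible $n\times n$ block formed by the first $n$ rows and $\hat X=X'\tilde X^{-1}$ the $(m-n)\times n$ matrix of the remaining rows, membership of $X$ in $W_0(m,n;\psi)$ is controlled by membership of $\hat X$ in a homogeneous-approximation set $W(m-n,n;\tilde\psi)$ for a comparable approximating function $\tilde\psi\asymp\psi$ (with implied constants depending on $N$, as in the estimate involving $\psi(\abs{\mathbf r})/nN$ in \eqref{eq:1}). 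I would first record that it therefore suffices to prove $\mcal H^f$ is infinite on the sub-slice of $A_{\epsilon,N}$ on which $\hat X\in W(m-n,n;\tilde\psi)$, and then let $\epsilon\to0$, $N\to\infty$.

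Next I would set up the slicing. Fix the invertible block $\tilde X$: the map $\hat X\mapsto \binom{I_n}{\hat X}\tilde X$ identifies the affine slice $\{X:\ \tilde X\text{ is the top block}\}$ (an $(m-n)n$-dimensional affine subspace $V+b$ of $\mathbb R^{mn}$, with $b\leftrightarrow\tilde X$ ranging over an $n^2$-dimensional complement $V^{\perp}$) bi-Lipschitzly, on compact pieces, with a neighbourhood of $\hat X$ in $\mat_{(m-n)\times n}(\mathbb R)$. On this slice, by Theorem \ref{hmkg} applied with dimensions $m-n$ and $n$ and dimension function $g(r)=r^{-n^2}f(r)$ — whose hypotheses are exactly the ones imposed in the lemma, namely $g$ and $r^{-(m-n-1)n}g(r)$ dimension functions and $r^{-(m-n)n}g(r)$ monotonic — the divergence sum
\[
\sum_{r=1}^{\infty} g(\Psi(r))\,\Psi(r)^{-(m-n-1)n}\, r^{(m-n)+n-1}
=\sum_{r=1}^{\infty} f(\Psi(r))\,\Psi(r)^{-(m-1)n}\, r^{m-1}
\]
(the two sums agree after substituting $g(\Psi(r))=\Psi(r)^{-n^2}f(\Psi(r))$ and collecting powers of $\Psi(r)$, since $n^2+(m-n-1)n=(m-1)n$ and $(m-n)+n-1=m-1$) is infinite by hypothesis. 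Hence $\mcal H^{g}\big(W(m-n,n;\tilde\psi)\big)=\mcal H^g(\mathbb I^{(m-n)n})=\infty$. Transporting this through the bi-Lipschitz factorisation (bi-Lipschitz maps preserve the property of having infinite $\mcal H^g$ measure) gives $\mcal H^{g}\big(W_0(m,n;\psi)\cap(V+b)\big)=\infty$ for every $b$ in a set $S\subseteq V^{\perp}$ of positive $\mcal H^{n^2}$ measure — namely those $\tilde X$ lying in the relevant range of $A_{\epsilon,N}$, i.e.\ $\epsilon<\det\tilde X<\epsilon^{-1}$ with bounded entries. Lemma \ref{slicing}, with $k=mn$, $l=n^2$, $V$ of dimension $mn-n^2=(m-n)n$, and the pair $(f,g)$ as above, then yields $\mcal H^f(W_0(m,n;\psi))=\infty$, as required.

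The main obstacle I anticipate is bookkeeping the bi-Lipschitz constants uniformly: the map $\hat X\mapsto\binom{I_n}{\hat X}\tilde X$ has derivative governed by $\tilde X$, so its Lipschitz constants are comparable to $\|\tilde X\|$ and $\|\tilde X^{-1}\|$, both of which are bounded above on $A_{\epsilon,N}$ precisely because of the constraints $\det\tilde X>\epsilon$, $\abs{x_{ij}}\le N$ — this is exactly why the set $A_{\epsilon,N}$ was introduced. One must check that the image under this map of the full set $W(m-n,n;\tilde\psi)$ (or rather its intersection with a large cube, using the integer-translation invariance noted after Lemma \ref{lem:bi-lip}) still carries infinite $\mcal H^g$ measure, and that the set $S$ of admissible $\tilde X$ genuinely has positive $n^2$-dimensional measure; both are routine given the positivity of $\abs{A_{\epsilon,N}}$. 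A minor secondary point is that Theorem \ref{hmkg} requires $m-n+n>2$, i.e.\ $m>2$, which holds since $m>n\ge1$ and $m+n>3$ force $m\ge3$ (when $n=1$) or $m\ge n+1\ge3$; and that $g$ being a dimension function with $r^{-(m-n)n}g(r)\to\infty$ places us in the infinite-measure branch of Theorem \ref{hmkg}, consistent with the hypothesis $r^{-mn}f(r)\to\infty$.
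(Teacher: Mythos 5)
Your proposal is correct and takes essentially the same approach as the paper: the factorisation $X=\binom{I_n}{\hat X}\tilde X$ on $A_{\epsilon,N}$, Theorem \ref{hmkg} applied to $W(m-n,n;\cdot)$ with the dimension function $g(r)=r^{-n^2}f(r)$ (whose hypotheses are precisely those of the lemma, with the same conversion of the divergent sum), and Lemma \ref{slicing} with $l=n^2$. The only cosmetic difference is the order of operations: the paper first slices the product set $W(m-n,n;\psi)\times\tilde{A}_{\epsilon,N}$ and then transports infinite $\mcal{H}^f$ measure through the bi-Lipschitz map $\eta$, whereas you transport each slice through the bi-Lipschitz factorisation first and then slice $W_0(m,n;\psi)$ directly.
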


\begin{proof}
  We define a Lipschitz map to transform our problem to a classical
  one. As in the finite measure case, we fix $\epsilon > 0$, $N \geq
  1$ and let
  \begin{equation*}
    A_{\epsilon, N} = \left\{X \in \mat_{m\times n}(\mathbb{R}) :
      \epsilon < \det(\tilde{X}) < \epsilon^{-1}, \quad \max_{1 \leq
        i,j\leq n} \abs{x_{ij}} \leq N \right\},
  \end{equation*}
  where $\tilde{X}$ denotes the $n \times n$-matrix formed by the first
  $n$ rows. We also define the set
  \begin{equation*}
    \tilde{A}_{\epsilon, N} = \left\{\tilde{X} \in \GL_{n}(\mathbb{R}) :
      \epsilon < \det(\tilde{X}) < \epsilon^{-1}, \quad \max_{1 \leq
        i,j\leq n} \abs{x_{ij}} \leq N \right\}.
  \end{equation*}
  For an appropriately chosen constant $c>0$ depending only on
  $m,n,\epsilon$ and $N$, we find that the map
  \begin{equation}
    \label{eq:2}
    \eta : W(m-n, n, c \psi) \times \tilde{A}_{\epsilon, N} \rightarrow
    W_0(m,n,\psi), \quad (Y, X) \mapsto
    \begin{pmatrix}
      X \\
      YX
    \end{pmatrix},
  \end{equation}
  is a Lipschitz embedding. Indeed, it is evidently injective as $X$
  is invertible for all elements of the domain. The Lipschitz
  condition follows as we have restricted the determinant to being
  positive. That the image is in $W_0(m,n,\psi)$ follows by
  considering the system of inequalities as above and choosing $c > 0$
  accordingly as above in the finite measure case. Consequently, the
  map $\eta$ is bi-Lipschitz onto its image. We have,
  \begin{eqnarray*} \mcal{H}^{f}(W_0(m, n; \psi))
    &\geq&\mcal{H}^f\left(\eta\left(W(m-n,n;\psi)\times \tilde{A}_{
          \epsilon, N}\right)\right) \notag \\
    &\asymp&\mcal{H}^{f}\left(W(m-n,n;\psi)\times \tilde{A}_{
        \epsilon, N}\right).
  \end{eqnarray*}
  
  The main idea of the proof is now to apply slicing, refer to
  Lemma~\ref{slicing}, to a tranlate of the Borel set
  $B:=W(m-n,n;\psi)\times \tilde{A}_{\epsilon, N} \subseteq
  \mb{I}^{mn}$. Initially, we fix an arbitrary point $X_0\in
  \tilde{A}_{e, N}$. Let $\sigma: \mathbb{R}^{n^2} \rightarrow
  \mathbb{R}^{n^2}$ be the translation map sending $X_0$ to the
  origin, \emph{i.e.}, $\sigma(X) = X-X_0$. Let $\tilde{\sigma}:
  \mathbb{R}^{mn} \rightarrow \mathbb{R}^{mn}$ be the map which leaves
  the upper $(m-n) \times n$ matrix untouched but applies $\sigma$ to
  the lower $n \times n$ matrix. We apply these maps to all sets
  above. This leaves Hausdorff measure invariant, so by abuse of
  notation we will denote the translated sets by the same letters as
  the original ones.

  Let $V$ be the space
  \[\mathbb{I}^{(m-n)n}\times{}\{0\}^{n^2}.\]
  Let \[S=V^\perp:=\{0\}^{(m-n)n}\times\mb{I}^{n^2}\] and further it
  has positive $\mcal{H}^{n^2}$-measure. Now for each $b\in{}S$
  \begin{eqnarray*}
  \mcal{H}^{g}\left(B\cap(V+b)\right)&=&\mcal{H}^{g}
  \left((W(m-n,n;\psi)\times \tilde{A}_{\epsilon,
      N})\cap(V+b)\right)\notag\\
  &=&\mcal{H}^{g}\left(\left(W(m-n,n;\psi)\times
      \{0\}^{n^2}\right)+b\right)\notag\\&\asymp &
  \mcal{H}^{g}(W(m-n,n;\psi)) \label{3.27} \\ &=& \infty \ \ \text{if}
  \ \ \hfill \sum\limits_{r=1}^\infty
  g(\Psi(r))\Psi(r)^{-(m-n-1)n}r^{m-1}=\infty.
\end{eqnarray*}

\noindent The slicing lemma yields that
\[\mcal{H}^{f}\left(W(m-n,n;\psi)\times
  \tilde{A}_{\epsilon, N}\right)=\infty \ \ \text{if} \
\sum\limits_{r=1}^\infty
g(\Psi(r))\Psi(r)^{-(m-n-1)n}r^{m-1}=\infty.\]

\noindent Since, $g:r\to r^{-n^2}f(r)$, we have

\begin{equation*}\label{3.30}
\mcal{H}^{f}(W_0(m,n;\psi))=\infty \ \ \textrm{if}\ \
\sum\limits_{r=1}^\infty f(\Psi(r))\Psi(r)^{-(m-1)n}r^{m-1}=\infty.
\end{equation*}

\end{proof}

\section{Proof of Theorem \ref{thmmlen}}

The method of proof of Theorem \ref{thmmlen} is similar to Theorem $2$
of \cite{mhjl} which relies mainly on Theorem \ref{thmmgn} and the
slicing technique. To be brief, one first shows that for $m \leq n$,
the set $W_0(m,n;\psi)$ must be contained in a hypersurface of dimension at
most $(m-1)(n+1) < mn$. This follows by proving that if $X \in
W_0(m,n;\psi)$, then the columns of $X$ must be linearly
dependent.

This required linear dependence can be removed from the problem by
introducing another bi-Lipschitz map to a non-degenerate setting. The
above methods applied for the case $m > n$ can then be applied to the
non-degenerate setting in both the case of finite and infinite
measure. The details are essentially the same as those in \cite{mhjl},
and are left to the interested reader.

\section{Concluding remarks}

In this paper we have made no effort to remove monotonic assumption on
the approximating function to prove the analogue of
Khintchine--Groshev theorem for the absolute value setup. However
there are still some open territories not investigated in this paper.
To conclude the paper we discuss them here.

Theorem \ref{thmmgn} provides a beautiful `zero--full' criterion under
certain divergent sum conditions but on the other hand Theorem
\ref{thmmlen} provides `zero--positive' criterion. The later theorem
relies on taking the linear combinations of the independent vectors
from the former but the combinations does not span the full space. It
is natural to conjecture that a `zero--full' law for Theorem
\ref{thmmlen} does indeed hold.

In the current paper settings the approximating function $\psi$ is
dependent on the supremum norm of the integer vector $\mbf q$.
Clearly, a natural generalization is to consider multivariable
approximating function, $\Psi:\mb Z^m \to \mb R^+$ and their
associated set $W_0(m, n; \Psi)$.

Another natural generalisation is the case of different rates of
approximation for each coordinate, \emph{i.e.}, when we consider
inequalities
\begin{equation*}
  |q_{_{1}}x_{_{1}i}+q_{_{2}}x_{_{2}i}+\dots+q_{m}x_{mi}| \leq
  \psi_i(|\mbf{q}|)\text{ for }(1\leq i\leq n),
\end{equation*}
where the $\psi_i$ are potentially different error functions. In this
case, it is shown in \cite{fishler} that the analogue Corollary
\ref{cor1} holds with $\psi(r)^n$ in the series replaced by $\psi_1(r)
\cdots \psi_n(r)$.

\def\cprime{$'$}
\providecommand{\bysame}{\leavevmode\hbox to3em{\hrulefill}\thinspace}
\providecommand{\MR}{\relax\ifhmode\unskip\space\fi MR }
\providecommand{\MRhref}[2]{%
  \href{http://www.ams.org/mathscinet-getitem?mr=#1}{#2}
}
\providecommand{\href}[2]{#2}

\end{document}